\newcommand{\K}{\mathcal{K}}
\newcommand{\R}{\mathbb{R}}
\newcommand{\N}{\mathbb{N}}
\newtheorem{theorem}{Theorem}
\newtheorem{lemma}{Lemma}
\renewcommand{\le}{\leqslant}
\renewcommand{\ge}{\geqslant}
\newcommand{\ddd}{ \,\frac{d\mu(\vartheta)\; dr}{|r|^{1+2s}} }
\newcommand{\XXX}{ S^{n-1}\times\R }
\begin{document}

\title[Nonlocal semilinear equations]{A rigidity result\\
for nonlocal semilinear equations}
\thanks{The authors have been supported by the
ERC grant 277749 ``EPSILON Elliptic
Pde's and Symmetry of Interfaces and Layers for Odd Nonlinearities''}

\author{Alberto Farina}

\address{Alberto Farina:
LAMFA -- CNRS UMR 6140 --
Universit\'e de Picardie Jules Verne --
Facult\'e des Sciences --
33, rue Saint-Leu --
80039 Amiens CEDEX 1, France --
Email:
{\tt alberto.farina@u-picardie.fr}  
}

\author{Enrico Valdinoci}

\address{Enrico Valdinoci:
Weierstra{\ss}-Institut f\"ur Angewandte Analysis und Stochastik --
Mohrenstra{\ss}e, 39 --
10117 Berlin, Germany -- and --
Universit\`a degli Studi di Milano -- Dipartimento di Matematica --
Via Cesare Saldini, 50 -- 20133 Milano, Italy --
Email:
{\tt enrico@math.utexas.edu} 
}

\begin{abstract}
We consider a possibly anisotropic integro-differential semilinear equation,
run by a nondecreasing nonlinearity.
We prove that if the solution grows at infinity less than the order
of the operator, then it must be affine (possibly constant).
\end{abstract}

\subjclass[2010]{35R11, 35B53, 35R09.}
\keywords{Nonlocal integro-differential semilinear equations, Liouville-type
theorems, nondecreasing nonlinearities.} 

\maketitle

\section{Introduction}

It dates back to Liouville and Cauchy
in~1844 that bounded harmonic functions are constant.
Several generalizations of this result appeared in the literature,
also involving nonlinear equations and more general growth of the solution
at infinity (see~\cite{handbook} for a detailed review of this topic).
\medskip

The purpose of this note is to obtain a rigidity result
for integro-differential semilinear equations
of fractional order~$2s$, with~$s\in(0,1)$.
\medskip

We recall that fractional integro-differential
operators are a classical topic in analysis, whose
study arises in different fields, including harmonic analysis~\cite{St},
partial differential equations~\cite{C} and probability~\cite{Be}. 
The study of these operators has also relevance in concrete
situations,
in view of the related real-world applications,
such as quantum mechanics~\cite{FLl},
water waves~\cite{CSS}, meteorology~\cite{CV},
crystallography~\cite{G},
biology~\cite{AAVV}, finance~\cite{Sc}
and high technology~\cite{ZL},
just to name a few.

The type of integro-differential
operators that we consider here are of the form
\begin{equation}\label{OP1}
{\mathcal{I}}u(x):=
\int_{\XXX}
\big(u(x+\vartheta r)+u(x-\vartheta r)-2u(x)\big)\ddd.\end{equation}
In this notation, $r\in\R$ gets integrated by the usual
Lebesgue measure~$dr$, while~$\vartheta\in S^{n-1}$
is integrated by a measure~$d\mu(\vartheta)$,
which is called ``spectral measure''
in jargon, and that satisfy the following
nondegeneracy assumptions: there exist~$\lambda$, $\Lambda\in(0,+\infty)$ such that
\begin{equation}\label{spectral}
\inf_{\nu\in S^{n-1}} \int_{S^{n-1}}|\nu\cdot\vartheta|^{2s}\,d\mu(\vartheta)
\ge\lambda \;{\mbox{ and }}\;
\mu(S^{n-1})\le \Lambda.
\end{equation}
Particularly famous cases of spectral measures are the ones
induced by singular kernels,
i.e. when~$d\mu(\vartheta)=\K_0(\vartheta)\,d{\mathcal{H}}^{n-1}(\vartheta)$,
with~$0<\inf_{S^{n-1}} \K_0\le\sup_{S^{n-1}} \K_0<+\infty$.
Notice that in this particular case the spectral measure
is absolutely continuous with respect to the standard Hausdorff measure
on~$S^{n-1}$, and
the operator in~\eqref{OP1} comes from the
integration against the homogeneous kernel
\begin{equation}\label{UI9011} 
\K(y):=|y|^{-n-2s} \K_0\left(\frac{y}{|y|}\right),\end{equation}
in the sense that
\begin{equation}\label{UI9012} {\mathcal{I}}u(x)=
\int_{\R^n}
\big(u(x+y)+u(x-y)-2u(x)\big)\,\K(y)\,dy.\end{equation}
Of course, the case~$\K$ equal to constant boils down to the
fractional Laplacian,
i.e. to the case in which, up to a normalization factor,~${\mathcal{I}}=
-(-\Delta)^s$.
\medskip

The literature has recently shown an increasing effort
towards the study of these types of anisotropic
operators, see e.g.~\cite{KRS, KS, ROS, ROV, ROSV}.
It is worth recalling that the picture provided
by the general operator in~\eqref{OP1} is often
quite special when compared with
the isotropic case, and sometimes even surprising:
for instance, a complete regularity
theory in the setting of~\eqref{OP1}--\eqref{spectral}
does not hold, and explicit counterexamples can be
constructed, see~\cite{ROV}.
\medskip

We will consider the equation~${\mathcal{I}}u=f(u)$.
This type of equations is often called ``semilinear''
since the nonlinearity only depends on the values of the solution
itself (for these reasons, solutions of semilinear equations
may satisfy better geometric properties than solutions
of arbitrary equations).

Our main result states that if~$f$ is
nondecreasing, then
solutions of~${\mathcal{I}}u=f(u)$ whose growth
at infinity is bounded by~$|x|^\kappa$, with~$\kappa$
less than the order of operator, must be necessarily affine
(and, in fact, constant when the nonlinearity is nontrivial).
More precisely, we have:

\begin{theorem}\label{Liouville}
Let~$f\in C(\R)$ be nondecreasing.
Let~$u\in C^2(\R^N)$ be a solution of
\begin{equation}\label{U}
{\mathcal{I}}u(x)=f(u(x)) \ {\mbox{ for any }} \ x\in\R^n.\end{equation}
Assume that
\begin{equation}\label{K} |u(x)|\le K\,(1+|x|^\kappa),\end{equation}
for some~$K\ge0$ and~$\kappa\in[0,2s)$. 

Then the following classification holds true:
\begin{itemize}
\item[{(i)}] if~$f$ is not identically zero, then~$u$ is constant, say~$u(x)=c$
for any~$x\in\R^n$, and~$f(c)=0$;
\item[{(ii)}]  if~$f$ is identically zero, then~$u$ is an affine
function, say~$u(x)=\varpi\cdot x+c$, for some~$\varpi\in\R^n$
and~$c\in\R$; in this case, if additionally~$\kappa<1$,
then~$\varpi=0$ and~$u$ is constant.
\end{itemize}
\end{theorem}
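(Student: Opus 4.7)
The plan is to reduce the theorem to a Liouville property for $\mathcal{I}$-subharmonic functions of slow growth by passing to finite differences. For each fixed $h\in\R^n$, I would set $w_h(x):=u(x+h)-u(x)$ and use the linearity and translation-invariance of $\mathcal{I}$ to obtain
\[
\mathcal{I}w_h(x) = f(u(x+h)) - f(u(x)).
\]
Since $f$ is nondecreasing, the right-hand side has the same sign as $w_h(x)$, so $w_h(x)\,\mathcal{I}w_h(x)\ge 0$ everywhere. Combining this with the elementary pointwise estimate $|a|+|b|-2|c|\ge\operatorname{sign}(c)(a+b-2c)$ inside the kernel-integral defining \eqref{OP1} yields the Kato-type inequality
\[
\mathcal{I}|w_h|(x) \ge \operatorname{sign}(w_h(x))\,\mathcal{I}w_h(x) \ge 0.
\]
Hence $|w_h|$ is a nonnegative, $\mathcal{I}$-subharmonic function on $\R^n$ inheriting from \eqref{K} a polynomial growth bound of order $\kappa<2s$.

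The crucial step, which I expect to be the main obstacle, is a Liouville lemma asserting that any nonnegative $\mathcal{I}$-subharmonic function $v$ on $\R^n$ with $v(x)\le C(1+|x|^\kappa)$ and $\kappa<2s$ must be constant. The natural approach is to exploit the spectral nondegeneracy \eqref{spectral}: splitting the kernel integral defining $\mathcal{I}v$ into near-field and far-field parts at scale $R$ and invoking the tail bound $v(y)\lesssim|y|^\kappa$, the far-field contribution is controlled by $R^{\kappa-2s}\to 0$, which, combined with the sub-mean-value inequality coming from $\mathcal{I}v\ge 0$, forces $v$ to equal its infimum at every point. Granting this, for each $h$ the value $|w_h(x)|$ is a constant $c_h\ge 0$, so $w_h$ takes values in $\{-c_h,c_h\}$. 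Continuity of $w_h$ and connectedness of $\R^n$ then produce $\psi(h)\in\R$ with $u(x+h)-u(x)=\psi(h)$ for every $x$. The telescoping identity
\[
\psi(h+h') = (u(x+h+h')-u(x+h')) + (u(x+h')-u(x)) = \psi(h)+\psi(h'),
\]
together with continuity of $\psi$ (which is $C^2$ since $u$ is), yields $\psi(h)=\varpi\cdot h$ for some $\varpi\in\R^n$; hence $u(x)=\varpi\cdot x+u(0)$ is affine.

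Once $u$ is known to be affine the second-order increments in \eqref{OP1} vanish identically, so $\mathcal{I}u\equiv 0$, and the equation forces $f(u(x))\equiv 0$. In case (i), if $\varpi\ne 0$ then the image of $u$ is all of $\R$ and $f\equiv 0$, contradicting the nontriviality of $f$; so $\varpi=0$ and $u\equiv c$ with $f(c)=0$. In case (ii) we already have $u$ affine, and the additional assumption $\kappa<1$ is incompatible with a nonzero linear part $\varpi\cdot x$ (which grows linearly), forcing $\varpi=0$ and $u$ constant.
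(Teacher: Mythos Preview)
Your strategy differs from the paper's and hinges entirely on the Liouville lemma you call ``the crucial step'': that a nonnegative function $v$ on $\R^n$ with $\mathcal{I}v\ge 0$ and $v(x)\le C(1+|x|^\kappa)$, $\kappa<2s$, must be constant. This lemma is \emph{false}. In one dimension with $s\in(1/2,1)$, take $v(x)=|x|$. The second--order increment $|x+r|+|x-r|-2|x|$ vanishes for $|r|\le|x|$ and equals $2(|r|-|x|)>0$ for $|r|>|x|$; integrating against $|r|^{-1-2s}$ gives $\mathcal{I}v(x)=c_s\,|x|^{1-2s}$ with $c_s=\dfrac{2}{s(2s-1)}>0$. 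Thus $v$ is nonnegative, strictly $\mathcal{I}$-subharmonic, and satisfies $v(x)\le 1+|x|$ with exponent $1<2s$, yet it is not constant. Your heuristic (``the sub-mean-value inequality\dots forces $v$ to equal its infimum'') is also oriented the wrong way: $\mathcal{I}v\ge 0$ yields $v(x)\le\text{(weighted average of }v\text{)}$, an upper bound, which cannot by itself pin $v$ down to its infimum. Since every subsequent step rests on this lemma, the proof does not go through as written.

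The paper avoids this obstacle by a different mechanism. Instead of finite differences and a subharmonic Liouville theorem, it constructs an explicit barrier $v\in C^\infty(\R^n)$ with $v(0)=0$, $0\le v(x)\le|x|^\gamma$, $v(x)=|x|^\gamma$ for $|x|\ge1$ (with $\gamma\in(\kappa,2s)$), and $\sup_{\R^n}\mathcal{I}v\le C$. For an arbitrary point $x_0$ and $\epsilon>0$, the functions $u(x)-u(x_0)\pm2\epsilon\mp\epsilon v(x-x_0)$ tend to $\mp\infty$ at infinity and therefore attain interior extrema; evaluating the equation there and using the monotonicity of $f$ gives $f(u(x_0)-2\epsilon)\le C\epsilon$ and $f(u(x_0)+2\epsilon)\ge -C\epsilon$. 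Sending $\epsilon\to0$ yields $f(u)\equiv 0$, whence $\mathcal{I}u=0$ in $\R^n$, and one concludes via the Liouville theorem for $\mathcal{I}$-\emph{harmonic} functions of sub-$2s$ growth (Theorem~2.1 in~\cite{ROS}), which \emph{is} valid. Your final paragraph, deducing cases~(i)--(ii) once $u$ is affine and $f(u)\equiv0$, does match the paper's.
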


We remark that Theorem~\ref{Liouville}
holds true here
for the very general integro-differential
operator in~\eqref{OP1}--\eqref{spectral}. Moreover,
as far as we know, Theorem~\ref{Liouville}
is new even in the case of regular spectral measure
in~\eqref{UI9011}--\eqref{UI9012}, and, perhaps quite surprisingly,
even in the isotropic case of the fractional Laplacian.
\medskip

On the other hand, when~${\mathcal{I}}$ is replaced by the Laplacian
(which is formally the above case with~$s=1$)
Theorem~\ref{Liouville} is a well known
result in the framework of classical Liouville-type theorems:
see for instance~\cite{handbook, serrin}.
As a matter of fact, the counterpart of~\eqref{U}
in the classical case, is the semilinear equation~$\Delta u=f(u)$,
set in the whole of~$\R^n$. This equation has been extensively
studied, also in connection with phase transition models,
see e.g.~\cite{M, Br} and references therein.
Its fractional analogue has also physical relevance,
since it appears, for instance, in the study of phase transitions
arising from long-range interactions and in the dynamics
of atom dislocations in crystal, see e.g.~\cite{CSM, SV, CS, GM, MP, DPV}.
\medskip

The strategy of the proof of
Theorem~\ref{Liouville} is to show that~$f(u(x))$ must be 
identically equal to zero in any case, therefore~$u$ is a solution
of~${\mathcal{I}}u=0$, and this will allow us to
use a Liouville-type theorem in order to obtain the desired classification.
To this goal, one uses the subcritical growth of
the solution~$u$ to compare with suitable barriers.
Of course, the construction of the appropriate barriers
is the main novelty with respect to the classical case,
since the nonlocality of the operator mostly comes into play
in this framework.\medskip

It is also worth pointing out that Theorem~\ref{Liouville}
has a natural, and simple, generalization
that deals with the case in which~\eqref{K} is replaced by a one-side
inequality. In this spirit, we present the following result:

\begin{theorem}\label{Liouville.2}
Let~$f\in C(\R)$ be nondecreasing.
Let~$u\in C^2(\R^N)$ be a solution of
$$ {\mathcal{I}}u(x)=f(u(x)) \ {\mbox{ for any }} \ x\in\R^n.$$
Then, if
$$ u(x)\le K\,(1+|x|^\kappa),$$
for some~$K\ge0$ and~$\kappa\in[0,2s)$, we have that
$$ {\mathcal{I}}u(x)\le0 \ {\mbox{ for any }} \ x\in\R^n.$$ 
Similarly, if
$$ u(x)\ge -K\,(1+|x|^\kappa),$$            
for some~$K\ge0$ and~$\kappa\in[0,2s)$, we have that
$$ {\mathcal{I}}u(x)\ge0 \ {\mbox{ for any }} \ x\in\R^n.$$
\end{theorem}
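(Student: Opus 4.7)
The two halves of the statement are equivalent via the substitution $u \mapsto -u$, $f(t) \mapsto -f(-t)$: the transformed nonlinearity is again continuous and nondecreasing, the operator $\mathcal{I}$ is linear and odd, and the growth hypothesis flips sign accordingly. I will therefore concentrate on proving the first half: assuming $u(x) \le K(1+|x|^\kappa)$ on $\R^n$, show that $\mathcal{I}u(x) \le 0$ at every point. One may also harmlessly assume $\kappa > 0$, since in the case $\kappa = 0$ the hypothesis $u \le 2K$ implies $u \le 2K(1+|x|^{\kappa'})$ for any $\kappa' \in (0,2s)$.

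The plan is to argue by contradiction. Suppose $\mathcal{I}u(\bar x) > 0$ at some $\bar x$, which after translation I place at the origin. Setting $c := u(0)$ and $2\eta := f(c) > 0$, monotonicity of $f$ gives $f(t) \ge 2\eta$ whenever $t \ge c$. For $R > 0$ large I would introduce the scaled barrier
\[
\Phi_R(x) := A\bigl[(R^2+|x|^2)^{\kappa/2}-R^\kappa\bigr], \qquad A := 2K + |c| + 1,
\]
which satisfies $\Phi_R \ge 0$, $\Phi_R(0) = 0$, and by a direct inspection $\Phi_R(x) \ge K(1+|x|^\kappa) - c$ for $|x|$ outside some ball $B_{R'}$ with $R' = O(R)$. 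A change of variables in~\eqref{OP1} gives the scaling identity $\mathcal{I}\Phi_R(x) = A R^{\kappa - 2s}(\mathcal{I}\Psi)(x/R)$ for the fixed profile $\Psi(y) := (1+|y|^2)^{\kappa/2}-1$; granting for the moment that $\mathcal{I}\Psi$ is bounded on $\R^n$, this yields $\sup_{\R^n}|\mathcal{I}\Phi_R| \le C R^{\kappa - 2s}$, which is $\le \eta$ for $R$ large since $\kappa < 2s$. Setting $w := u - c - \Phi_R$, one has $w(0) = 0$ and $w \le 0$ outside $B_{R'}$, so $w$ attains a nonnegative global maximum at some $y_R \in \overline{B_{R'}}$. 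At such a maximum the integrand $w(y_R + \vartheta r) + w(y_R - \vartheta r) - 2w(y_R)$ in~\eqref{OP1} is pointwise nonpositive, so $\mathcal{I}w(y_R) \le 0$ and hence
\[
\mathcal{I}u(y_R) = \mathcal{I}w(y_R) + \mathcal{I}\Phi_R(y_R) \le \eta.
\]
On the other hand $w(y_R) \ge 0$ forces $u(y_R) \ge c + \Phi_R(y_R) \ge c$, whence $\mathcal{I}u(y_R) = f(u(y_R)) \ge 2\eta$, producing the contradiction $2\eta \le \eta$.

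The main obstacle I foresee is the claimed boundedness of $\mathcal{I}\Psi$ on the whole of $\R^n$ for the general anisotropic operator in~\eqref{OP1}--\eqref{spectral}. For the classical fractional Laplacian this follows from the explicit identity $(-\Delta)^s|y|^\kappa = c_{\kappa,s}|y|^{\kappa-2s}$ together with the $C^2$ smoothing of $\Psi$ near the origin; in the present setting I would instead split the integral in~\eqref{OP1} at $|r|=1$. The near-zero piece is controlled by the uniform bound $\|D^2\Psi\|_\infty < \infty$ (valid since $\kappa < 2s < 2$) together with the upper bound $\mu(S^{n-1}) \le \Lambda$, while the tail is controlled by the sub-$2s$ growth of $\Psi$ at infinity combined with the same mass bound on $\mu$. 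The nondegeneracy lower bound in~\eqref{spectral} plays no role in this argument, which relies only on an upper-type control of the spectral measure; this is consistent with the fact that Theorem~\ref{Liouville.2} makes no claim of classification of $u$, only of the sign of $\mathcal{I}u$.
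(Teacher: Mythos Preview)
Your strategy is the same as the paper's: build a smooth barrier of sub-$2s$ growth whose $\mathcal{I}$ is uniformly bounded, touch $u$ from above, and combine the equation with the monotonicity of $f$. The paper packages this via the barrier $v$ of Lemma~\ref{barrier} (essentially $|x|^\gamma$ with $\gamma\in(\kappa,2s)$ smoothed at the origin) and a small multiplicative parameter $\epsilon\to0^+$, arriving directly at $f(u(x_0))\le0$; you use the profile $(1+|y|^2)^{\kappa/2}$ with a large dilation parameter $R\to\infty$ and argue by contradiction. These are equivalent parameterizations of the same comparison argument, and your reduction of the second statement to the first by $u\mapsto -u$ matches the paper's symmetric use of $w_1$ versus $w_2$.

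The one place your sketch is incomplete is exactly the point you flag: the global bound on $\mathcal{I}\Psi$. Your splitting at $|r|=1$ does control the short-range piece uniformly in $x$ (since $\|D^2\Psi\|_{L^\infty(\R^n)}<\infty$), but the tail estimate ``sub-$2s$ growth plus $\mu(S^{n-1})\le\Lambda$'' only bounds $\mathcal{I}_2\Psi(x)$ when $x$ ranges over a fixed ball, as in Lemma~\ref{lemma P2}; for large $|x|$ the crude bound on $|\Psi(x\pm\vartheta r)|$ picks up a factor of $|x|^\kappa$. What closes the gap is the same rescaling you already used for $\Phi_R$: writing $\omega=x/|x|$ and substituting $r=|x|\varrho$ gives $\mathcal{I}\Psi(x)=|x|^{\kappa-2s}\cdot(\text{integral in }\omega,\varrho)$ with the bracketed integral bounded uniformly in $\omega\in S^{n-1}$; since $\kappa<2s$ this shows $\mathcal{I}\Psi(x)\to0$ as $|x|\to\infty$. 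This is precisely the content of the paper's Lemma~\ref{lemma P3}, and with it your argument is complete.
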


We also point out that the regularity
assumptions of~$u$
in Theorems~\ref{Liouville} and~\ref{Liouville.2} were taken for
the sake of simplicity and, in concrete cases, one does not
need~$u$ to be smooth to start with
(for instance, in the setting of~\cite{KRS} one can deal
with viscosity solutions and in the setting of~\cite{ROS}
one can deal with weak solutions).\medskip

The rest of the paper is organized as follows. 
First, in Section~\ref{S:T}
we collect some preliminary integral computations that will be
used in Section~\ref{S:B} to construct a useful barrier.
Roughly speaking, this barrier replaces the classical paraboloid
in our nonlocal framework (of course, checking the
properties of the paraboloid in the classical case is much simpler
than constructing barriers in nonlocal cases).

The proofs of Theorems~\ref{Liouville} and~\ref{Liouville.2}
occupy
Section~\ref{P:P}.

\section{Toolbox}\label{S:T}

Below are some preliminary integral computations, needed to
construct a suitable barrier in Section~\ref{S:B}.
For convenience, we use the notation
\begin{eqnarray*}
&&{\mathcal{I}}_1 v(x):=\int_{S^{n-1}\times(-1,1)} 
\big(v(x+\vartheta r)+v(x-\vartheta r)-2v(x)\big)\ddd
\\{\mbox{and }}&&
{\mathcal{I}}_2 v(x):=\int_{S^{n-1}\times(\R\setminus (-1,1))}
\big(v(x+\vartheta r)+v(x-\vartheta r)-2v(x)\big)\ddd,
\end{eqnarray*}
in order to distinguish the integration performed when~$|r|<1$
from the one when~$|r|\ge1$.

\subsection{Estimates near the origin}
Here we estimate~${\mathcal{I}}_1 v$ and~${\mathcal{I}}_2 v$
near the origin
according to the following
Lemmata~\ref{lemma P1} and~\ref{lemma P2}:

\begin{lemma}\label{lemma P1}
Let~$v\in C^2(B_3)$. Then, for any~$x\in B_1$,
$$ {\mathcal{I}}_1 v(x)\le C,$$
for some~$C>0$ possibly depending on~$n$, $s$, $\Lambda$
and~$\|v\|_{C^2(B_2)}$.
\end{lemma}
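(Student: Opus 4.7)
The plan is a direct second-order Taylor expansion, exploiting that the range of integration is compact and keeps us inside the set where $v$ is $C^2$. For $x\in B_1$, $\vartheta\in S^{n-1}$ and $|r|<1$ we have $x\pm\vartheta r\in B_2$, so the Taylor formula applied to $v$ at $x$ gives
\begin{equation*}
v(x+\vartheta r)+v(x-\vartheta r)-2v(x)=\tfrac{1}{2}r^2\bigl(\vartheta^{T}D^{2}v(\xi_{+})\vartheta+\vartheta^{T}D^{2}v(\xi_{-})\vartheta\bigr),
\end{equation*}
with $\xi_{\pm}$ on the segment joining $x$ to $x\pm\vartheta r$, hence inside $B_2$. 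The odd first-order terms cancel thanks to the symmetry of $\mathcal{I}_1$, which is the whole point of writing the increment in the symmetric form $v(x+\vartheta r)+v(x-\vartheta r)-2v(x)$.

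From the Taylor identity and $|\vartheta|=1$ we obtain the pointwise bound
\begin{equation*}
\bigl|v(x+\vartheta r)+v(x-\vartheta r)-2v(x)\bigr|\leq\|v\|_{C^{2}(B_{2})}\,r^{2}.
\end{equation*}
Plugging this into the definition of $\mathcal{I}_1$ and using the second inequality in \eqref{spectral}, namely $\mu(S^{n-1})\leq\Lambda$, we get
\begin{equation*}
|\mathcal{I}_{1}v(x)|\;\leq\;\|v\|_{C^{2}(B_{2})}\,\mu(S^{n-1})\int_{-1}^{1}\frac{r^{2}}{|r|^{1+2s}}\,dr\;\leq\;\Lambda\,\|v\|_{C^{2}(B_{2})}\int_{-1}^{1}|r|^{1-2s}\,dr.
\end{equation*}

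The remaining one-dimensional integral is $\int_{-1}^{1}|r|^{1-2s}\,dr=\tfrac{2}{2-2s}$, which is finite precisely because $s\in(0,1)$ (the exponent $1-2s>-1$). Setting $C:=\tfrac{2\Lambda}{2-2s}\|v\|_{C^{2}(B_{2})}$ yields the desired bound, and $C$ depends only on $s$, $\Lambda$ and $\|v\|_{C^{2}(B_{2})}$ (the dimension $n$ enters only implicitly through the Hessian norm). There is no real obstacle here; the only subtle point worth noting is the cancellation of the gradient term in the symmetric increment, without which the integral would diverge near $r=0$ under the sole condition $s<1$.
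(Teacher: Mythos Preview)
Your argument is correct and follows essentially the same approach as the paper: bound the symmetric second difference by $\|D^{2}v\|_{L^\infty(B_2)}\,r^{2}$ via a second-order Taylor expansion (the first-order terms cancel), then integrate against $|r|^{-1-2s}$ over $(-1,1)$ and use $\mu(S^{n-1})\le\Lambda$ from~\eqref{spectral}. Your write-up is a bit more explicit (Lagrange remainders for each branch and the exact value of the one-dimensional integral), but the logic is identical.
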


\begin{proof} If $x$, $y\in B_1$
we obtain from a Taylor expansion that
$$ \big| v(x+y)+v(x-y)-2v(x)\big|\le \|D^2 v\|_{L^\infty(B_2)}\,|y|^2.$$
hence, by integration, we get
\begin{eqnarray*} {\mathcal{I}}_1 v(x)&\le& 
\int_{S^{n-1}\times(-1,1)} \|D^2 v\|_{L^\infty(B_2)}\,|r|^2\ddd\\
&\le &\Lambda \, \|D^2 v\|_{L^\infty(B_2)}\,\int_0^1 |r|^{1-2s}\,dr,\end{eqnarray*}
thanks to~\eqref{spectral}, which gives the desired result.
\end{proof}

\begin{lemma}\label{lemma P2}
Let
\begin{equation}\label{g}
\gamma\in(0, 2s).\end{equation}
Let~$v:\R^n\rightarrow[0,+\infty)$ be a measurable
function such that~$v(x)\le |x|^\gamma$ for any~$x\in\R^n$. 
Then, for any~$x\in B_1$,
$$ {\mathcal{I}}_2 v(x)\le C,$$
for some~$C>0$ possibly depending on~$n$, $s$, $\Lambda$
and~$\gamma$.
\end{lemma}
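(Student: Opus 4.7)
The plan is to bound $\mathcal{I}_2 v(x)$ by dropping the negative contribution of $-2v(x)$ (which is legitimate since $v \ge 0$) and then controlling the positive part $v(x\pm\vartheta r)$ by the prescribed growth rate $|x\pm\vartheta r|^\gamma$. Because the domain of integration is $|r|\ge 1$ and $x\in B_1$, the argument $x\pm\vartheta r$ lives at distance comparable to $|r|$ from the origin, so this growth can be absorbed by the decay $|r|^{-1-2s}$ as soon as $\gamma<2s$.

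More precisely, first I would observe that $-2v(x)\le 0$, hence
\[
{\mathcal{I}}_2 v(x)\le \int_{S^{n-1}\times(\R\setminus(-1,1))} \bigl(v(x+\vartheta r)+v(x-\vartheta r)\bigr)\ddd.
\]
For $x\in B_1$ and $\vartheta\in S^{n-1}$, the triangle inequality gives $|x\pm\vartheta r|\le 1+|r|\le 2|r|$ whenever $|r|\ge 1$, so the assumption $v(y)\le|y|^\gamma$ yields
\[
v(x+\vartheta r)+v(x-\vartheta r)\le 2\cdot 2^\gamma |r|^\gamma.
\]

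Next I would factor out the angular integration and use the second bound in \eqref{spectral}, namely $\mu(S^{n-1})\le\Lambda$, to reduce to a one-dimensional radial integral:
\[
{\mathcal{I}}_2 v(x)\le 2^{\gamma+1}\Lambda \int_{\R\setminus(-1,1)} \frac{|r|^\gamma}{|r|^{1+2s}}\,dr = 2^{\gamma+2}\Lambda \int_1^{+\infty} r^{\gamma-1-2s}\,dr.
\]
Thanks to the hypothesis \eqref{g}, i.e.\ $\gamma<2s$, the exponent satisfies $\gamma-1-2s<-1$, so the last integral converges to a finite constant depending only on $s$ and $\gamma$. Collecting the factors, we obtain the desired bound with a constant $C=C(n,s,\Lambda,\gamma)$.

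No step here looks delicate: the estimate is essentially a scaling/integrability check, and the only place where the hypotheses really enter is the comparison $\gamma<2s$, which is exactly what makes the tail integral finite. I do not expect any genuine obstacle; the main point is simply to verify that the trivial bound $v(x\pm\vartheta r)\le(1+|r|)^\gamma$, combined with the finiteness of $\mu(S^{n-1})$, is already enough, and that one does not need any finer cancellation from the $-2v(x)$ term in the regime $|r|\ge 1$.
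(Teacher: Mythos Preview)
Your argument is correct and follows essentially the same route as the paper: bound $v(x\pm\vartheta r)$ by $|x\pm\vartheta r|^\gamma\le (1+|r|)^\gamma$, use $\mu(S^{n-1})\le\Lambda$, and invoke $\gamma<2s$ for the convergence of $\int_1^\infty r^{\gamma-1-2s}\,dr$. The only cosmetic difference is that you discard the term $-2v(x)$ via the nonnegativity of $v$, whereas the paper keeps it and bounds $|v(x)|\le|x|^\gamma\le 1\le|r|^\gamma$; this leads to slightly different constants but the idea is identical.
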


\begin{proof} Let~$x\in B_1$ and~$y\in\R^n\setminus B_1$.
Then~$|x|\le1\le|y|$ and so
\begin{eqnarray*}
&& |v(x+y)+v(x-y)-v(x)|\\ &&\qquad\le |v(x+y)|+|v(x-y)|+|v(x)|\\ &&\qquad
\le |x+ y|^\gamma+|x-y|^\gamma
+|x|^\gamma
\\ &&\qquad\le 2(|x|+|y|)^\gamma+|x|^\gamma\\ &&\qquad
\le (2^{\gamma+1}+1) \,|y|^\gamma.\end{eqnarray*}
So, we integrate, we recall~\eqref{spectral} and we see that
\begin{eqnarray*} {\mathcal{I}}_2 v(x)&\le&
\int_{S^{n-1}\times(\R^n\setminus (-1,1))}
(2^{\gamma+1}+1) \,|r|^\gamma \ddd\\&\le&
2(2^{\gamma+1}+1)\,\Lambda\,\int_1^{+\infty} r^{\gamma-1-2s}\,dr.\end{eqnarray*}
Then we use~\eqref{g}
and we obtain the desired result.
\end{proof}

\subsection{Estimates far from the origin}
Now we estimate~${\mathcal{I}}v=
{\mathcal{I}}_1v+{\mathcal{I}}_2v$
at infinity:

\begin{lemma}\label{lemma P3}
Let~$\gamma$ be as in~\eqref{g}
and~$v:\R^n\rightarrow\R$ be a measurable
function such that~$v(x)\le |x|^\gamma$ for any~$x\in\R^n$.

Assume also that~$v(x)=|x|^\gamma$ for any~$x\in\R^n\setminus B_1$.
Then, for any~$x\in \R^n\setminus B_1$,
$$ {\mathcal{I}}v(x)\le C,$$
for some~$C>0$ possibly depending on~$n$, $s$,
$\Lambda$ and~$\gamma$.
\end{lemma}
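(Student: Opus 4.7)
The plan is to bound $\mathcal{I}v(x)$ by the corresponding quantity for the reference function $\phi(y):=|y|^\gamma$, and then exploit the $\gamma$-homogeneity of $\phi$ to reduce everything to a uniform bound on the unit sphere, which is dealt with via a near/far split in the same spirit as Lemmata~\ref{lemma P1} and~\ref{lemma P2}.

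First, for every $x\in\R^n\setminus B_1$ we have $v(x)=\phi(x)$ by assumption, while $v\le\phi$ holds everywhere on $\R^n$. Summing the pointwise inequalities $v(x\pm\vartheta r)\le\phi(x\pm\vartheta r)$ and using $v(x)=\phi(x)$ gives
\[
v(x+\vartheta r)+v(x-\vartheta r)-2v(x)\le\phi(x+\vartheta r)+\phi(x-\vartheta r)-2\phi(x)
\]
for every $(\vartheta,r)\in\XXX$. Since the kernel in~\eqref{OP1} is nonnegative, integration preserves this inequality, so the problem reduces to proving $\mathcal{I}\phi(x)\le C$ for every $x\in\R^n\setminus B_1$.

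Next, I would use the fact that $\phi$ is positively homogeneous of degree $\gamma$. Writing $x=|x|\xi$ with $\xi\in S^{n-1}$ and changing variable $r=|x|\rho$ inside the integral defining $\mathcal{I}\phi(x)$, the factor $|x|^\gamma$ comes out of the integrand, while the Jacobian together with the denominator $|r|^{1+2s}$ produces $|x|^{-2s}$, so that
\[
\mathcal{I}\phi(x)=|x|^{\gamma-2s}\,\mathcal{I}\phi(\xi).
\]
Because of~\eqref{g} and the hypothesis $|x|\ge 1$, the scaling factor $|x|^{\gamma-2s}$ is at most $1$, and matters are reduced to controlling $\mathcal{I}\phi(\xi)$ uniformly in $\xi\in S^{n-1}$.

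Finally, I would split this last integral at $|r|=1/2$. On $\{|r|<1/2\}$, since $|\xi|=1$ one has $|\xi\pm\vartheta r|\ge 1/2$, so $\phi$ is $C^2$ on all the evaluation points with bounds depending only on $\gamma$; a second-order Taylor expansion around $\xi$, combined with~\eqref{spectral}, gives a contribution controlled by $\int_0^{1/2}r^{1-2s}\,dr$, which is finite because $s<1$. On $\{|r|\ge 1/2\}$ I would use the crude bound $\phi(\xi\pm\vartheta r)\le(1+|r|)^\gamma$ and~\eqref{spectral}, with convergence of $\int_{1/2}^{+\infty}(1+r)^\gamma\,r^{-1-2s}\,dr$ guaranteed precisely by $\gamma<2s$. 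The main (though mild) obstacle is to make sure the $C^2$ bounds for $\phi$ in the first region are uniform in $\xi$, which is automatic by rotational congruence of the annulus $\{1/2\le|y|\le 3/2\}$; the conceptual novelty compared with the previous two lemmata is the choice of $\phi$ as a comparison function and the use of its homogeneity to absorb all the $|x|$-dependence.
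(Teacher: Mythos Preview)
Your proposal is correct and follows essentially the same route as the paper: compare~$v$ with~$\phi(y)=|y|^\gamma$, use the $\gamma$-homogeneity of~$\phi$ together with the change of variable $r=|x|\rho$ to factor out~$|x|^{\gamma-2s}$, and then handle the remaining integral over the unit sphere by a split at~$|\rho|=1/2$ with a Taylor estimate on the inner part and the crude bound $(1+|\rho|)^\gamma$ on the outer part. The paper packages the inner integral via the auxiliary function $g(\eta)=|\omega+\eta|^\gamma$ rather than writing it as $\mathcal{I}\phi(\xi)$, but this is only a notational difference.
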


\begin{proof} Fix~$x\in \R^n\setminus B_1$. Then~$v(x)=|x|^\gamma$.
Moreover~$v(x\pm y)\le|x\pm y|^\gamma$, and so
$$ v(x+y)+v(x-y)-2v(x)\le |x+y|^\gamma+|x- y|^\gamma
-2|x|^\gamma.$$
Therefore, calling~$\omega:=x/|x|$ and changing variable~$r:=|x|\varrho$,
we have that
\begin{equation}\label{su}
\begin{split}
{\mathcal{I}}v(x) \, &=
\int_{\XXX}\big(v(x+\vartheta r)+v(x-\vartheta r)-2v(x)\big)\ddd
\\&\le
\int_{\XXX}\big(|x+\vartheta r|^\gamma+|x- \vartheta r|^\gamma-2|x|^\gamma\big)\ddd \\
&=|x|^{\gamma-2s} \int_{\XXX}\big(|\omega+
\vartheta \varrho|^\gamma+|\omega- \vartheta \varrho|^\gamma-2\big) 
\,\frac{d\mu(\vartheta)\; d\varrho}{|\varrho|^{1+2s}}
\\ &=|x|^{\gamma-2s}
\int_{\XXX} \frac{ 
g(\vartheta \varrho)+g(-\vartheta \varrho)-2g(0)}{|\varrho|^{1+2s}}
\,d\mu(\vartheta)\; d\varrho,
\end{split}\end{equation}
where, for any~$\eta\in\R^n$, we set~$g(\eta):=|\omega+\eta|^\gamma$.
Notice that
\begin{equation}\label{P0}
|g(\eta)|\le (|\omega|+|\eta|)^\gamma =(1+|\eta|)^\gamma.\end{equation}
Moreover~$g\in C^\infty(B_{1/2})$ and, for any~$\eta\in B_{1/2}$
we have that
\begin{eqnarray*}
&&\partial_i g(\eta)=\gamma|\omega+\eta|^{\gamma-2}(\omega_i+\eta_i)\\
{\mbox{and }}&& \partial_{ij}^2 g(\eta)=
\gamma(\gamma-2)|\omega+\eta|^{\gamma-4}(\omega_i+\eta_i)
(\omega_j+\eta_j)+
\gamma|\omega+\eta|^{\gamma-2}\delta_{ij}.
\end{eqnarray*}
Consequently, for any~$\eta\in B_{1/2}$,
$$ |D^2 g(\eta)|\le C_o
|\omega+\eta|^{\gamma-2},$$
for some~$C_o>0$ depending on~$\gamma$ and~$n$,
and~$|\omega+\eta|\ge |\omega|-|\eta|\ge 1/2$, therefore
$$ \|D^2g\|_{L^\infty(B_{1/2})}\le 2^{2-\gamma} C_o.$$
This, together with a Taylor expansion, implies that, for any~$\eta\in B_{1/2}$,
$$ |g(\eta)+g(-\eta)-2g(0)|\le \|D^2g\|_{L^\infty(B_{1/2})}\,|\eta|^2
\le 2^{2-\gamma} C_o\,|\eta|^2.$$
Hence, recalling~\eqref{P0} and~\eqref{spectral}, we obtain that
\begin{eqnarray*}
&& \int_{\XXX} \frac{
g(\vartheta \varrho)+g(-\vartheta \varrho)-2g(0)}{|\varrho|^{1+2s}}
\,d\mu(\vartheta)\; d\varrho\\
&\le& C\,\left(
\int_{S^{n-1}\times(-1/2,1/2)} \frac{|\varrho|^2}{|\varrho|^{1+2s}}
\,d\mu(\vartheta)\; d\varrho+
\int_{S^{n-1}\times(\R\setminus(-1/2,1/2))} \frac{(1+|\varrho|)^\gamma}{
|\varrho|^{1+2s}}
\,d\mu(\vartheta)\; d\varrho\right)
\\ &\le& C\Lambda\,
\left(
\int_{(-1/2,1/2)} \frac{|\varrho|^2}{|\varrho|^{1+2s}}
\,d\varrho
+ \int_{\R\setminus(-1/2,1/2)} \frac{(1+|\varrho|)^\gamma}{
|\varrho|^{1+2s}}
\,d\varrho\right)\\
&\le& C'\Lambda,
\end{eqnarray*}
for some~$C$, $C'>0$, thanks to~\eqref{g}.
We insert this into~\eqref{su} and we obtain the desired estimate
(by possibly renaming the constants).
\end{proof}
 
\section{Construction of an auxiliary barrier}\label{S:B}

Here we use the estimate in Section~\ref{S:T}
and we borrow some ideas from~\cite{DSV} to construct a useful
auxiliary function:

\begin{lemma}\label{barrier}
Let~$\gamma\in(0,2s)$. There exists a function~$v\in C^\infty(\R^n)$ such that
\begin{eqnarray}
&& \label{F.0} v(0)=0,\\
&& \label{F.1} 0\le v(x) \le |x|^\gamma {\mbox{ for any }} x\in\R^n,\\
&& v(x)=|x|^\gamma {\mbox{ if }} |x|\ge 1 \label{F.2} \\
{\mbox{and }}&& 
\sup_{x\in\R^n}{\mathcal{I}}v(x)\le C,
\label{F.3}
\end{eqnarray}
for some~$C>0$.
\end{lemma}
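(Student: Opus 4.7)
The plan is to construct $v$ as a smooth truncation of the power $|x|^\gamma$ that removes its singularity at the origin, and then to verify the pointwise bound on $\mathcal{I}v$ by splitting the analysis into $x\in B_1$ and $x\in\R^n\setminus B_1$, invoking the three toolbox lemmas of Section~\ref{S:T}.

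First I would fix a radial cutoff $\eta\in C^\infty_c(\R^n;[0,1])$ with $\eta\equiv 1$ on $B_{1/2}$ and $\eta\equiv 0$ outside $B_1$, and set
$$v(x):=\bigl(1-\eta(x)\bigr)|x|^\gamma.$$
Since $\eta\equiv 1$ on a whole neighborhood of the origin, the prefactor $1-\eta$ kills the singular corner of $|x|^\gamma$: on $B_{1/2}$ one has $v\equiv 0$, while on $\R^n\setminus B_{1/2}$ both $1-\eta$ and $|x|^\gamma$ are $C^\infty$. Hence $v\in C^\infty(\R^n)$. The conditions \eqref{F.0}, \eqref{F.1}, \eqref{F.2} are then immediate from $0\le 1-\eta\le 1$ and from $\eta\equiv 0$ on $\R^n\setminus B_1$.

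To obtain \eqref{F.3}, I would estimate $\mathcal{I}v(x)=\mathcal{I}_1 v(x)+\mathcal{I}_2 v(x)$ uniformly in $x$ by considering two regions. For $x\in B_1$: the function $v$ is smooth, hence in particular in $C^2(B_3)$, so Lemma~\ref{lemma P1} gives $\mathcal{I}_1 v(x)\le C$; moreover $v\ge 0$ and $v(y)\le|y|^\gamma$ for every $y\in\R^n$, so Lemma~\ref{lemma P2} gives $\mathcal{I}_2 v(x)\le C$. For $x\in\R^n\setminus B_1$: the hypotheses of Lemma~\ref{lemma P3} are exactly what \eqref{F.1} and \eqref{F.2} say, so $\mathcal{I}v(x)\le C$ directly. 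Combining the two cases yields a uniform constant $C>0$ such that $\sup_{x\in\R^n}\mathcal{I}v(x)\le C$.

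There is no real obstacle here beyond bookkeeping: the whole content of the lemma is that, once the toolbox of Section~\ref{S:T} is available, a single, explicit cutoff of $|x|^\gamma$ works as a global barrier. The one place where care is needed is in the choice of $\eta$: it must be identically $1$ (not merely vanish) on a neighborhood of the origin, otherwise $v$ would inherit the non-smoothness of $|x|^\gamma$ at $0$ and fail to be in $C^\infty(\R^n)$, which in turn is what allows us to apply Lemma~\ref{lemma P1} on $B_1$ with a finite $\|v\|_{C^2(B_2)}$.
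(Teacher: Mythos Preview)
Your argument is correct and follows essentially the same route as the paper: the authors also take $v(x)=(1-\tau(x))|x|^\gamma$ with a smooth cutoff $\tau$ equal to $1$ on $B_{1/2}$ and $0$ outside $B_1$, check \eqref{F.0}--\eqref{F.2} directly, and then obtain \eqref{F.3} by invoking Lemmata~\ref{lemma P1}, \ref{lemma P2}, \ref{lemma P3}. Your write-up is in fact more explicit than the paper's about the $B_1$ versus $\R^n\setminus B_1$ split and about why $v$ is genuinely $C^\infty$.
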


\begin{proof} Let~$\tau\in C^\infty(\R^n)$ be such that~$0\le\tau\le1$
in the whole of~$\R^n$,
$\tau=1$
in~$B_{1/2}$ and~$\tau=0$ in~$\R^n\setminus B_1$.
We define~$v(x):=\big(1-\tau(x)\big)|x|^\gamma$.
In this way, conditions~\eqref{F.0}, \eqref{F.1}
and~\eqref{F.2} are fulfilled.

Furthermore, $v$ satisfies all the assumptions of Lemmata~\ref{lemma P1},
\ref{lemma P2} and~\ref{lemma P3}. Thus, using such results, we obtain
condition~\eqref{F.3}.
\end{proof}

\section{Proof of the main results}\label{P:P}

\begin{proof}[Proof of Theorem~\ref{Liouville}]
The proof relies on a modification of a classical argument
(see for instance~\cite{serrin, handbook}).
In our setting, the barrier constructed in Lemma~\ref{barrier}
will replace (at least from one side) the classical paraboloid.
The details of the argument goes as follows.
Let~$f$, $u$, $K$ and~$\kappa$ as in the statement of
Theorem~\ref{Liouville}. Let~$\gamma:=(2s+\kappa)/2$.
By construction,
\begin{equation}\label{G2}
\gamma\in(\kappa,2s),\end{equation} so we can use
the barrier~$v$ constructed in Lemma~\ref{barrier}.
We fix~$\epsilon>0$ and an arbitrary point~$x_0\in\R^n$,
and we define
\begin{equation}\label{xe}\begin{split}
& w_1(x) := u(x)-u(x_0)+2\epsilon-\epsilon v(x-x_0)\\
{\mbox{and }}\qquad&w_2(x) := u(x)-u(x_0)-2\epsilon+\epsilon v(x-x_0)
\end{split}\end{equation}
We remark that
\begin{eqnarray*}&&
\limsup_{|x|\to+\infty} 
w_1(x)\le\limsup_{|x|\to+\infty} 
[\, u(x)+|u(x_0)|+2\epsilon- \epsilon v(x-x_0)]\\
&&\qquad\le
\limsup_{|x|\to+\infty} 
[ \, K\,(1+|x|^\kappa)+|u(x_0)|+2\epsilon-\epsilon|x-x_0|^\gamma]
=-\infty\\
{\mbox{and }}
&&
\liminf_{|x|\to+\infty} 
w_2(x)\ge\liminf_{|x|\to+\infty}
[ \, u(x)-|u(x_0)|-2\epsilon+ \epsilon v(x-x_0)] \\
&&\qquad\ge
\liminf_{|x|\to+\infty}
[ \, -K\,(1+|x|^\kappa)-|u(x_0)|-2\epsilon+\epsilon|x-x_0|^\gamma]
=+\infty,
\end{eqnarray*}
where we have used~\eqref{K}, \eqref{F.2} and~\eqref{G2}.
As a consequence the maximum of~$w_1$ and the minimum of~$w_2$
are attained, i.e. there exists~$y_1$, $y_2\in\R^n$ such that
\begin{equation}\label{Ex} w_1(y)\le w_1(y_1) \ {\mbox{ and }} \
w_2(y)\ge w_2(y_2) \ {\mbox{ for any $y\in\R^n$. }}\end{equation}
Accordingly, for any~$y\in\R^n$,
\begin{equation}\label{X-1}
\begin{split}
&w_1(y_1+y)+w_1(y_1-y)-2w_1(y_1) \le 0\\
{\mbox{and }}&w_2(y_1+y)+w_2(y_1-y)-2w_2(y_2) \ge 0.
\end{split}\end{equation}
On the other hand
\begin{equation}\label{X-2}
\begin{split}
&w_1(y_1+y)+w_1(y_1-y)-2w_1(y_1)\\ &\quad=
u(y_1+y)+u(y_1-y)-2u(y_1)\\ &\quad\quad-\epsilon \big(v(y_1+y-x_0)+
v(y_1-y-x_0)-2v(y_1-x_0)\big),\\
{\mbox{and }}\quad&
w_2(y_2+y)+w_2(y_2-y)-2w_2(y_2)\\ &\quad=
u(y_2+y)+u(y_2-y)-2u(y_2)\\ &\quad\quad+\epsilon \big(v(y_2+y-x_0)+
v(y_2-y-x_0)-2v(y_2-x_0)\big).
\end{split}\end{equation}
By comparing~\eqref{X-1} and~\eqref{X-2}, we obtain that
\begin{equation}\label{16.bis}\begin{split}
0\,&\ge \int_{\XXX} \big(w_1(y_1+\vartheta r)+w_1(y_1-\vartheta r)-2w_1(y_1)\big)\ddd
\\&={\mathcal{I}}u(y_1)-\epsilon{\mathcal{I}}v(y_1-x_0)\\
{\mbox{and }}\qquad\quad 0\,&\le
\int_{\XXX} \big(w_2(y_2+\vartheta r)+w_2(y_2-\vartheta r)-2w_2(y_2)\big)\ddd
\\&={\mathcal{I}}u(y_2)+\epsilon{\mathcal{I}}v(y_2-x_0).
\end{split}\end{equation}
Therefore, using and~\eqref{U} and~\eqref{F.3}, we obtain that
\begin{equation}\label{Fx}
0\ge f\big(u(y_1)\big)-C\epsilon\qquad
{\mbox{and }}\qquad 0\le f\big(u(y_2)\big)+C\epsilon.
\end{equation}
Now we observe that~$w_1(x_0)=2\epsilon\ge0$
and~$w_2(x_0)=-2\epsilon\le0$, thanks to~\eqref{xe} and~\eqref{F.0}
So, if we
evaluate~\eqref{Ex} at the point~$y:=x_0$,
we obtain that
\begin{equation}\label{90}
0\le w_1(x_0)\le w_1(y_1) \ {\mbox{ and }} \ 0\ge w_2(x_0)\ge w_2(y_2).\end{equation}
Furthermore, using that~$v\ge0$ (recall~\eqref{F.1}), we see from~\eqref{xe}
that
\begin{eqnarray*}
&& w_1(y_1) \le u(y_1)-u(x_0)+2\epsilon
\ {\mbox{ and }} \
w_2(y_2)\ge
u(y_2)-u(x_0)-2\epsilon.\end{eqnarray*}
By comparing this with~\eqref{90}, we conclude
that
$$ {\mbox{$u(y_1)\ge u(x_0)-2\epsilon$ \
and \ $u(y_2)\le u(x_0)+2\epsilon$.}}$$
Therefore, since~$f$ is nondecreasing, we deduce that
$${\mbox{$f\big(u(y_1)\big)\ge
f\big(u(x_0)-2\epsilon\big)$ \
and \ $f\big(u(y_2)\big)\le f\big(u(x_0)+2\epsilon\big)$.}}$$
We plug this information into~\eqref{Fx}, and we obtain that
\begin{equation}\label{188}
0\ge f\big(u(x_0)-2\epsilon\big)-C\epsilon\qquad
{\mbox{and }}\qquad 0\le f\big( u(x_0)+2\epsilon\big)+C\epsilon.\end{equation}
We remark that~$x_0$ was fixed at the beginning and so it is independent of~$\epsilon$
(conversely, the points~$y_1$ and~$y_2$ in general may depend on~$\epsilon$).
This says that we can pass to the limit
as~$\epsilon\to0^+$ in~\eqref{188} and use
the continuity of~$f$ to obtain that
$$ 0\ge f\big(u(x_0)\big)\qquad
{\mbox{and }}\qquad 0\le f\big( u(x_0)\big),$$
that is~$f\big( u(x_0)\big)=0$.
Since~$x_0$ is an arbitrary point of~$\R^n$, we have proved that
\begin{equation}\label{FF}
{\mbox{$f\big( u(x)\big)=0$
for any~$x\in\R^n$.}}\end{equation} 
Thus, using again~\eqref{U}, we obtain that
\begin{equation*}
{\mbox{${\mathcal{I}}u=0$ in~$\R^n$.}}\end{equation*}
{F}rom this and Theorem~2.1 in~\cite{ROS}, we obtain
that~$u$ is a polynomial of degree~$d\in\N$,
with~$d$ less than or equal to the integer part of~$\kappa$.
In particular, $d\le \kappa<2s<2$, hence~$d\in\{0,1\}$
and thus~$u$ is an affine function. So
we can write~$
u(x)=\varpi\cdot x+c$, for some~$\varpi\in\R^n$
and~$c\in\R$.

As a side remark notice that, since~$d\le\kappa$, 
when the additional assumption~$\kappa<1$ holds true,
we have that~$d=0$, and consequently~$\varpi=0$
and~$u$ is constant.
These considerations establish
claim~(ii) in Theorem~\ref{Liouville}.

Now we prove that
\begin{equation}\label{OM-9}
{\mbox{if~$f$ does not vanish identically, then~$\varpi=0$}.}\end{equation}
Indeed, if, by contradiction, we had~$\varpi\ne0$,
given any~$r\in\R$, we can take~$x_\star:= (r-c)|\varpi|^{-2}\varpi$.
Then
$$ u(x_\star) = \varpi\cdot x_\star+c =r,$$
thus, by~\eqref{FF}, we get that~$f(r)=f(u(x_\star))=0$.
Since~$r$ was arbitrary, this would say that~$f$ vanishes identically,
in contradiction with our assumptions. This proves~\eqref{OM-9}.

By~\eqref{OM-9} and~\eqref{FF} we obtain claim~(i) in Theorem~\ref{Liouville}.
This completes
the proof of Theorem~\ref{Liouville}. \end{proof}

\begin{proof}[Proof of Theorem~\ref{Liouville.2}]
The proof of Theorem~\ref{Liouville} goes through in this case, just considering
only the function~$w_1$ (to obtain the first statement
of Theorem~\ref{Liouville.2}), or only the function~$w_2$
(to obtain the second statement). \end{proof}

\bigskip

\vfill

\end{document}